 \newtheorem{thm}{Theorem}[section]
 \newtheorem{lem}[thm]{Lemma}
 \theoremstyle{definition}
 \newtheorem{defn}[thm]{Definition}
 \theoremstyle{remark}
 \newtheorem{rem}[thm]{Remark}
 \numberwithin{equation}{section}
\begin{document}

\title[]
{On the Perelman's reduced entropy and Ricci flat manifolds with maximal volume growth}

\author{Liang Cheng, Anqiang Zhu}

\dedicatory{}
\date{}

 \subjclass[2000]{
Primary 53C44; Secondary 53C42, 57M50.}

\keywords{Ricci flat manifolds; Maximal volume growth; Quadratic curvature decay; Ricci flow;  Perelman's reduced entropy}
\thanks{}

\address{Liang Cheng, School of Mathematics and Statistics, Huazhong Normal University,
Wuhan, 430079, P.R. CHINA}

\email{math.chengliang@gmail.com}

\address{Anqiang Zhu, School of Mathematics and Statistics, Wuhan University,
Wuhan, 430072, P.R. CHINA}

\email{anqiangzhu@yahoo.com.cn}

 \maketitle

\begin{abstract}
In this paper, we study the Ricci flat manifolds with maximal volume growth using Perelman's reduced volume of Ricci flow. We show that if $(M^n,g)$ is an noncompact complete Ricci flat manifold with maximal volume growth satisfying $|Rm|(x)\to 0$ as $d(x)=d_g(x,p)\to \infty$, then $M^n$ has the quadratic curvature decay. Some applications to this result are also presented.
\end{abstract}

\section{Introduction}
Let $(M^n,g)$ be a complete $n$-dimensional manifold. Denote $Rc$ be the Ricci curvature of $(M^n,g)$. The manifold $(M^n,g)$ is call the Ricci flat manifold if it satisfies $Rc \equiv 0$.
It has been an important question to study Ricci flat manifolds both in Riemannian geometry and physics.
 In physics, the Ricci flat manifolds represent vacuum solutions to the analogues of Einstein's equations for Riemannian manifolds of any dimension, with vanishing cosmological constant.
The Calabi-Yau manifolds, which are a special case of Ricci flat manifolds, play the important role
  in certain branches of mathematics such as algebraic geometry, as well as in theoretical physics. Especially in superstring theory, the extra dimensions of spacetime are conjectured to take the form of a 6-dimensional Calabi-Yau manifold.
We focus our attention on noncompact Ricci flat manifolds in this paper. There are many examples of Ricci flat manifolds and one may see \cite{GH}, \cite{GNP}, \cite{H}, \cite{K}, \cite{Pe1}, \cite{Pe2}, \cite{TY} and the references therein for more information.

\begin{defn}
We say the noncompact complete manifold $(M^n,g)$ has quadratic curvature decay if
\begin{equation}\label{condition_1}
|Rm|(x)\leq \frac{C}{d^2(x)},
\end{equation}
 where $|Rm|$ is the norm of curvature tensor $Rm$, $C$ is a finite constant and $d(x)=d_g(p,x)$ for some fixed point $p\in M^n$.
\end{defn}

The topology of the manifolds with quadratically curvature decay was also studied by many geometrists. It should be point out first that quadratic decay of curvature has no topological meaning since M.Gromov \cite{G} showed that any connected smooth paracompact manifold has a complete metric with quadratic curvature decay (also see \cite{L2}). However, an interesting result proved by U.Abresh \cite{A} that a complete manifold has finite topological type if its curvature decays faster than quadratically, i.e $|Rm|(x)=O(d(x)^{-2-\epsilon})$ for some $\epsilon>0$.
Here a manifold $M^n$ is said to have finite topological type if there is a compact domain $\Omega$ whose boundary $\partial\Omega$ is a topological manifold such that $M\backslash \Omega$ is homeomorphic to $\partial \Omega \times [0,\infty)$. Thus, there exists a critical rate of curvature decay, the quadratically curvature decay, around which the situation changes completely.
Then J.Sha and Z.Shen \cite{SS} proved that a complete manifold with quadratic curvature decay has the finite topology type if it has nonnegative Ricci curvature and maximal volume growth, where
we use the following
\begin{defn}
 We say the noncompact complete manifold $(M^n,g)$ has maximal volume growth if
\begin{equation}
\frac{\text{Vol}_gB(p,r)}{\omega_n r^n}\geq \alpha_M>0, \ \forall r>0,
\end{equation}
for some $p\in M^n$, where $\alpha_M$ is a constant and $\omega_n$ is the volume of unit ball in $\mathbb{R}^n$.
\end{defn}
\noindent In \cite{L2}, J.Lott and Z.Shen also obtained that
a complete manifold with quadratic curvature decay has the finite topology type if $vol_g(B(p,r)) =
o(r^2)$ as $t\to\infty$ and $M^n$ does not collapse at infinity, i.e. $\inf\limits_{x\in M} vol_g(B(x,1))>0$.

The aim of this paper is to study on what conditions can guarantee the Ricci flat manifolds have the quadratic curvature decay?
In \cite{CT}, J.Cheeger and G.Tian  proved that a $4$-dimensional noncompact complete Ricci flat manifold with its curvature in $L^2$ has the quadratic curvature decay. Another result in this direction is the following theorem, proved by S.Bando, A.Kasue, H.Nakajima \cite{BKN}: an $n$-dimensional noncompact complete Ricci flat manifold $M^n$ with maximal volume growth satisfying
\begin{equation}
\int_M |Rm|^{\frac{n}{2}} dvol_g<\infty
\end{equation}
 has faster than quadratic curvature decay, i.e $|Rm|(x)=O(d(x)^{-2-\epsilon})$ for some $\epsilon>0$. A similar result without assuming maximal volume growth also obtained by V.Minerbe \cite{M}.

Our main result in this paper is
the following
\begin{thm}\label{main}
Let $(M^n,g)$ be the noncompact complete Ricci flat manifold with maximal volume growth. If
\begin{equation}\label{condition_3}
|Rm|(x)\to 0, \ \text{as} \ d(x)=d_g(x,p)\to \infty,
\end{equation}
then
$M^n$ has the quadratic curvature decay.
\end{thm}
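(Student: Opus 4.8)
The plan is to argue by contradiction through a blow-up analysis, exploiting that a Ricci flat metric is a static (indeed eternal) solution of the Ricci flow, so that Perelman's reduced volume based at a space-time point $(x,0)$ is available and, by the static structure, collapses to an explicit weighted volume.

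First I would suppose that the quadratic decay \eqref{condition_1} fails. Then there is a sequence $x_i$ with $d(x_i)\to\infty$ and $\lambda_i:=d^2(x_i)|Rm|(x_i)\to\infty$; writing $Q_i:=|Rm|(x_i)$, hypothesis \eqref{condition_3} forces $Q_i\to0$. After a Perelman-type point selection (replacing $x_i$ by a nearby point of almost maximal curvature) I may assume $|Rm|$ is comparable to $Q_i$ on the $g$-ball $B(x_i,AQ_i^{-1/2})$ for each fixed $A$ and all large $i$. Rescaling $g_i:=Q_ig$ keeps the flow static and Ricci flat, normalizes $|Rm|_{g_i}(x_i)=1$, and---since maximal volume growth together with the Bishop--Gromov inequality gives two-sided volume bounds that survive rescaling---yields uniform noncollapsing and a uniform upper volume bound. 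Hamilton's compactness theorem then produces a pointed smooth limit $(M_\infty,g_\infty,x_\infty)$ that is complete, Ricci flat, of bounded curvature, of Euclidean volume growth, and nonflat, since $|Rm|_{g_\infty}(x_\infty)=1$.

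Next I would bring in the reduced volume. For the static flow the $\mathcal{L}$-geodesics are ordinary geodesics and $R\equiv 0$, so the reduced distance is $\ell(q,\tau)=d^2(p,q)/4\tau$ and the reduced volume is the explicit quantity $\tilde V(\tau)=(4\pi\tau)^{-n/2}\int_M e^{-d^2(p,q)/4\tau}\,dV$. By Perelman's monotonicity $\tilde V$ is non-increasing, with $\tilde V(\tau)\to 1$ as $\tau\to 0$ and $\tilde V(\tau)\to\nu_M$, the asymptotic volume ratio (positive by maximal volume growth), as $\tau\to\infty$; in particular $\nu_M\le\tilde V\le 1$. Reduced volume is invariant under parabolic rescaling, $\tilde V_{g_i,x_i}(\tau)=\tilde V_{g,x_i}(\tau/Q_i)$, and it is continuous under the convergence above, the Gaussian tails being controlled by the Bishop upper volume bound, so the limit reduced volume satisfies $\tilde V_\infty(\tau)=\lim_i\tilde V_{g,x_i}(\tau/Q_i)$ with $\tau/Q_i\to\infty$.

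The heart of the matter, and what I expect to be the main obstacle, is to convert the boundedness and monotonicity of $\tilde V$ into rigidity for $M_\infty$. Since the total monotonicity defect $\int_0^\infty(-\tfrac{d}{d\tau}\tilde V)\,d\tau=1-\nu_M$ is finite while the rescalings push the observation window $[\tau_1/Q_i,\tau_2/Q_i]$ out to $\tau\to\infty$, I would argue that the defect over this window vanishes in the limit, i.e. that equality holds in Perelman's monotonicity for $M_\infty$. Perelman's rigidity then forces $(M_\infty,g_\infty)$ to be a gradient shrinking soliton, $Rc_{g_\infty}+\nabla^2 f=\tfrac{1}{2\tau}g_\infty$; combined with $Rc_{g_\infty}\equiv 0$ this gives $\nabla^2 f=\tfrac{1}{2\tau}g_\infty$, and by the classical theorem of Tashiro a complete manifold carrying a function whose Hessian is a positive constant multiple of the metric is isometric to $\mathbb{R}^n$. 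This contradicts $|Rm|_{g_\infty}(x_\infty)=1$ and closes the argument. The delicate points are precisely the extraction of this soliton structure---localizing the vanishing defect and ruling out that the total defect concentrates exactly at the rescaled window rather than spreading across scales---together with the continuity of the noncompact reduced-volume integral under Cheeger--Gromov convergence; by comparison, the point selection guaranteeing uniformly bounded curvature on large balls, needed for Hamilton compactness, should be routine.
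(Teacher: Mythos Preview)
Your overall architecture---contradiction, Perelman point picking, rescaling, Hamilton compactness, then reduced volume to force the limit Euclidean---is exactly the paper's. The divergence is at the step you yourself flag as delicate, and the obstacle there is genuine rather than merely technical.

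Your monotonicity-defect argument does not close. The total defect $1-\nu_M$ is finite, but it is the defect of $\tilde V_{g,x_i}$, and the base point varies with $i$. For a \emph{single} monotone function, pushing the observation window to infinity forces the windowed defect to zero; here each $i$ carries its own function, and nothing prevents the bulk of the defect of $\tilde V_{g,x_i}$ from sitting precisely at scale $\tau\sim Q_i^{-1}$---indeed, since that is exactly the scale at which the rescaled geometry near $x_i$ is nontrivial, one should \emph{expect} it to concentrate there. In the limit the windowed defect becomes $\tilde V_\infty(\tau_1)-\tilde V_\infty(\tau_2)$, which is precisely what you are trying to show vanishes; the argument is circular.

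The paper closes this gap with a direct geometric lemma (its Lemma~3.1) that avoids monotonicity entirely. The point is that the \emph{base points themselves} escape to infinity in $(M,g)$, so by the hypothesis $|Rm|\to 0$ at infinity one has $\sup_{B_g(y_i',\,d(y_i')/2)}|Rm|\to 0$ while the radii $d(y_i')/2\to\infty$; together with maximal volume growth and the Cheeger--Gromov--Taylor injectivity radius estimate, this makes the normal-coordinate volume density at $y_i'$ tend to $1$ and the segment domain exhaust $T_{y_i'}M$. Comparing the reduced-volume integral written in these coordinates to the Euclidean Gaussian then gives $\mathcal V_{y_i'}(\tau)\to 1$ directly, hence $\mathcal V^\infty_{y_\infty}(\tau)=\lim_i\mathcal V_{y_i'}(\lambda_i^{-1}\tau)=1$, and Perelman's rigidity yields $M_\infty\cong\mathbb R^n$ without passing through the soliton equation or Tashiro.
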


\begin{rem}
 The condition (\ref{condition_1}) is equivalent to
\begin{equation}\label{condition_2}
\int_{B(p,2r)\backslash B(p,r)} |Rm|^{\frac{n}{2}}\leq \Lambda,\ \text{$\Lambda$ is independent of $r$,}
\end{equation}
on the Ricci flat manifold $(M^n,g)$ with maximal volume growth (see \cite{CT1}).  So it follows from Theorem \ref{main} that
the conditions (\ref{condition_1}), (\ref{condition_3}) and (\ref{condition_2}) are equivalent to each other on Ricci flat manifolds with maximal volume growth.
\end{rem}

As the applications to Theorem \ref{main}, the condition (\ref{condition_1}) can be replaced by apparently weak condition (\ref{condition_3}) in some results for Ricci flat manifolds with maximal volume growth.
First, by Theorem \ref{main} and the result proved by J.Sha and Z.Shen (see Theorem 1.1 in \cite{SS}), we have the following

\begin{thm}\label{cor_1}
Let $(M^n,g)$ be an noncompact complete Ricci flat manifold with maximal volume growth. If $|Rm|(x)\to 0$ as $d(x)=d_g(x,p)\to \infty$, then
$(M^n,g)$ has the finite topological type.
\end{thm}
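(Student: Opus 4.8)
The plan is to derive Theorem \ref{cor_1} as a direct consequence of our main result together with the finiteness theorem of J.\ Sha and Z.\ Shen (Theorem 1.1 in \cite{SS}), which asserts that a complete manifold enjoying quadratic curvature decay, nonnegative Ricci curvature, and maximal volume growth has finite topological type. Since the hypotheses of Theorem \ref{cor_1} coincide verbatim with those of Theorem \ref{main}, the entire analytic difficulty has already been absorbed into the proof of Theorem \ref{main}; what remains is merely to verify that the three structural conditions required by Sha--Shen are all in place.

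First I would record that the Ricci flat assumption $Rc\equiv 0$ trivially supplies the nonnegative Ricci curvature hypothesis $Rc\geq 0$. Second, maximal volume growth is assumed outright in the statement. The only nontrivial ingredient is quadratic curvature decay, and this is exactly the conclusion of Theorem \ref{main}: applying that theorem to $(M^n,g)$ produces a finite constant $C$ with $|Rm|(x)\leq C/d^2(x)$, i.e.\ condition (\ref{condition_1}) holds. With all three conditions now established, an application of the Sha--Shen theorem yields that $(M^n,g)$ has finite topological type, completing the argument.

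I do not expect any genuine obstacle at this stage, since the corollary is a formal combination of two already-established results. The substantive work lies entirely in Theorem \ref{main}, where one must upgrade the qualitative decay hypothesis (\ref{condition_3}) to the quantitative rate (\ref{condition_1}); once that upgrade is available, the passage to finite topology is immediate by citation. If anything warrants a remark, it is only that one should check the point $p$ used to measure distance in the maximal volume growth condition is compatible with the $p$ appearing in the curvature decay estimate, but on a complete manifold the relevant estimates are independent of the choice of basepoint up to a change of constants, so no additional care is needed.
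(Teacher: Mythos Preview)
Your proposal is correct and matches the paper's approach exactly: the paper derives this theorem as an immediate consequence of Theorem~\ref{main} together with Theorem~1.1 of Sha--Shen \cite{SS}, precisely as you outline. No further argument is needed.
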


\begin{rem}
In \cite{AKL}, M.T.Anderson, P.B.Kronheimer, C.LeBrun constructed an example that there exists a complete Ricci-flat k\"{a}hler manifold which has the infinite topological type.
\end{rem}

In \cite{CT1}, J.Cheeger and G.Tian studied the cone structure at infinity of Ricci flat manifolds with maximal volume growth and quadratic curvature decay. Let $(M^n,g)$ be the manifold with nonnegative Ricci curvature and maximal volume growth. Then $(M^n,r_i^{-2}g, p_0)$ subconverges in the pointed Gromov-Hausdorff topology to a length space $M_{\infty}$ by Gromov's compactness theorem, where $p_0\in M^n$ and $r_i\to +\infty$. Moreover, $M_{\infty}=C(N^{n-1})$ is a metric cone \cite{CC}. Note that $M_{\infty}$ may not unique \cite{P2}. However, J.Cheeger and G.Tian \cite{CT1} proved that $M_{\infty}$ is unique if $(M^n,g)$ is a Ricci flat manifold with maximal volume growth and the cone $M_{\infty}$ is integrable (see Definition 0.11 in \cite{CT1}).
Combining Theorem \ref{main} and Theorem 0.13 in \cite{CT1}, we have the following theorem holds.

\begin{thm}\label{cor_2}
Let $(M^n,g)$ be an noncompact complete Ricci flat manifold with maximal volume growth. Fix any point $p_0\in M^n$. Suppose that $(M^n,r_i^{-2}g, p_0)$ subconverges in the pointed Gromov-Hausdorff topology to a metric cone $M_{\infty}=C(N^{n-1})$, where $r_i\to +\infty$. If $|Rm|(x)\to 0$ as $d(x)=d_g(x,p)\to \infty$ and the cone $M_{\infty}=C(N^{n-1})$ is integrable \cite{CT1}, then
$M_{\infty}$ is unique.
\end{thm}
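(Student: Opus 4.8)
The plan is to derive Theorem~\ref{cor_2} as a direct synthesis of our main result, Theorem~\ref{main}, with Theorem~0.13 of Cheeger--Tian~\cite{CT1}. The entire content of the argument is to produce, from the weaker pointwise hypothesis $|Rm|(x)\to 0$, exactly the quadratic curvature decay that serves as the standing assumption in the cone-structure analysis of~\cite{CT1}; once this bridge is in place, the uniqueness of the tangent cone at infinity can be quoted verbatim.

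First I would invoke Theorem~\ref{main}. By hypothesis $(M^n,g)$ is noncompact, complete, Ricci flat, and has maximal volume growth, and $|Rm|(x)\to 0$ as $d(x)\to\infty$. Theorem~\ref{main} then yields the quadratic curvature decay estimate~(\ref{condition_1}), i.e.\ there is a finite constant $C$ with $|Rm|(x)\le C/d^2(x)$. Thus $(M^n,g)$ now satisfies all the geometric hypotheses under which Cheeger and Tian analyze the cone structure at infinity: Ricci flatness, maximal volume growth, and quadratic curvature decay.

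Next I would feed this into Theorem~0.13 of~\cite{CT1}. We are given that $(M^n,r_i^{-2}g,p_0)$ subconverges in the pointed Gromov--Hausdorff topology to a metric cone $M_{\infty}=C(N^{n-1})$ and that this cone is integrable in the sense of Definition~0.11 of~\cite{CT1}. Since quadratic curvature decay has been established in the previous step, every hypothesis of Theorem~0.13 of~\cite{CT1} is met, and that theorem asserts precisely that the limit cone is independent of the chosen sequence $r_i\to+\infty$. Hence $M_{\infty}$ is unique, which is the desired conclusion.

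The only point requiring care---rather than a genuine obstacle---is the matching of hypotheses across the two sources: one must check that the notion of quadratic curvature decay produced by Theorem~\ref{main} coincides with the decay assumption built into the framework of~\cite{CT1}, and that the integrability condition and the metric-cone structure of the Gromov--Hausdorff limit are used in~\cite{CT1} exactly as stated here. That the Gromov--Hausdorff limit is in fact a metric cone is automatic for manifolds with nonnegative Ricci curvature and maximal volume growth by~\cite{CC}, so this assumption is harmless. All of the analytic difficulty is concentrated in Theorem~\ref{main}; the present theorem is obtained purely by composition.
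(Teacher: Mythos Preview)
Your proposal is correct and follows exactly the paper's own approach: the paper states that Theorem~\ref{cor_2} holds by ``Combining Theorem~\ref{main} and Theorem~0.13 in~\cite{CT1},'' which is precisely the two-step composition you describe. No further argument is given or needed.
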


The main tool for the proof of Theorem \ref{main} is the Perelman's reduced volume for the Ricci flow.
In \cite{P1}, Perelman introduced the reduced entropy (i.e. reduced distance and reduced volume), which becomes one of powerful tools for studying Ricci flow.
 Let $g(\tau)$ solves the backward Ricci flow
\begin{align}\label{backward_Ricci_flow}
    \frac{\partial g}{\partial \tau}=2Rc,
\end{align}
 on $M^n\times
[-T,0]$. Fix $p\in M^n$ and let $\gamma$ be a path
joining $(p,0)$ and $(y,\tau)$. Then the $\mathcal{L}$-length is defined in \cite{P1} as
\begin{align}
    \mathcal{L}_p(\gamma)=\int^{\tau}_0\sqrt{\eta}(R(\gamma(\eta))+|\gamma'(\eta)|^2)d\eta.
\end{align}
Denote $L_p(y,\tau)$ be the length of a shortest
$\mathcal{L}$-length joining $(p,0)$ and $(y,\tau)$. Let
$
l_p(y,\tau)=\frac{L_p(y,\tau)}{2\sqrt{\tau}}
$
be the $l$-length joining $(p,0)$ and $(y,\tau)$. Perelman's
reduced volume is defined as
\begin{align}\label{r_volume}
   \mathcal{V}_p(\tau)=\int_{M} (4\pi\tau)^{-\frac{n}{2}}e^{-l_p(y,\tau)}dvol_{g(\tau)}(y).
\end{align}
Perelman \cite{P1} showed that the reduced volume defined in (\ref{r_volume}) is
monotone non-increasing under the backward Ricci flow (\ref{backward_Ricci_flow}) and the Ricci flow is a gradient shrinking soliton if $ \mathcal{V}_p(\tau)$ is independent of $\tau$ (also see Theorem \ref{reduced_volume}). We remark that
\begin{align}\label{flat_reduced_l}
l_p(y,\tau)=\frac{d_g(p,y)^2}{4\tau},
\end{align}
and
\begin{align}\label{flat_r_volume}
   \mathcal{V}_p(\tau)=\int_{M} (4\pi\tau)^{-\frac{n}{2}}e^{-\frac{d_g(p,y)^2}{4\tau}}dvol_{g}(y),
\end{align}
for the static Ricci flow on the Ricci flat manifold $(M^n,g)$.

At first sight, it seems difficult to study the geometric and topological properties of Ricci flat manifold by using Ricci flow since the Ricci flow is static on such manifold.
However, since the Ricci flat shrinking soliton must be isometric to $\mathbb{R}^n$ (see Lemma 7.1 in \cite{N}) and by Theorem \ref{reduced_volume}, we see that Perelman's reduced volume (\ref{flat_r_volume}) on the Ricci flat manifold $(M^n,g)$ is strictly decreasing under the static backward Ricci flow unless $(M^n,g)$ is isometric to the Euclidean space (also see Lemma 8.4 in \cite{CCGGI}).
Then it gives a light on that we may study the Ricci flat manifolds using the Perelman's reduced volume for Ricci flow.

The organization of the paper is as follows. In section 2, we recall some basic facts of Ricci flow, which will be used in the proof of Theorem \ref{main}. In section 3, we give the proof of Theorem \ref{main}.

\section{preliminaries}
In this section, we recall some basic facts of Ricci flow, which will be used in the proof of Theorem \ref{main}.
First, we need the following gradient estimate for Ricci flow proved by W.X.Shi \cite{S}.

\begin{thm}\label{shi_estimates}\cite{S}
Let  $(M^n, g(t))$ ($0\leq t\leq T$) be the solution to the Ricci
flow on the $n$-dimensional complete
manifold. Fix $p\in M$. Let $K<\infty$ and $\alpha>0$ be positive
constants. Then for each non-negative integer $k\leq \alpha$ and
each $r>0$, there is an uniform constant $C_k=C_k(K,r,n,\alpha)$
such that if
\begin{align*}
    |Rm(x,t)|\leq K, \ \text{for}\ (x,t)\in B_{g(0)}(p,r)\times [0,T],
\end{align*}
we have
\begin{align*}
    |\nabla^k Rm(y,t)|\leq \frac{C_k}{t^{\frac{k}{2}}}
\end{align*}
for all $y\in B_{g(0)}(p,\frac{r}{2})$ and $t\in (0,T]$.
\end{thm}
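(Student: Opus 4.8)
The plan is to prove these derivative bounds by the \emph{Bernstein--Shi} method: derive the parabolic evolution equations satisfied by the covariant derivatives of the curvature, form auxiliary quantities that couple a derivative norm to the next-lower-order norm, and apply the maximum principle on the parabolic cylinder $B_{g(0)}(p,r)\times[0,T]$ after a spatial cutoff. First I would record the evolution equations under the Ricci flow $\partial_t g=-2Rc$. Writing $A*B$ for a schematic metric contraction of tensors $A$ and $B$, one has
\begin{equation}
(\partial_t-\Delta)\nabla^k Rm=\sum_{i+j=k}\nabla^i Rm*\nabla^j Rm,
\end{equation}
and therefore, for the squared norms,
\begin{equation}
(\partial_t-\Delta)|\nabla^k Rm|^2=-2|\nabla^{k+1}Rm|^2+\sum_{i+j=k}\nabla^i Rm*\nabla^j Rm*\nabla^k Rm.
\end{equation}
The decisive feature is the good term $-2|\nabla^{k+1}Rm|^2$, which will be spent to absorb the next derivative appearing in the evolution of a lower-order quantity.

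For the base case $k=1$ I would consider, on the cylinder, a quantity of the form $F=t\,|\nabla Rm|^2+\beta\,|Rm|^2$ (with a cutoff inserted, see below). Using the two identities and the hypothesis $|Rm|\le K$, one finds
\begin{equation}
(\partial_t-\Delta)F\le\bigl(1+cKt-2\beta\bigr)|\nabla Rm|^2+c\beta K^3,
\end{equation}
so choosing $\beta$ large makes the coefficient of $|\nabla Rm|^2$ nonpositive, leaving a controlled source term. The maximum principle then bounds $F$ by its initial data plus a controlled contribution, whence $t\,|\nabla Rm|^2\le C$, i.e.\ $|\nabla Rm|\le C/\sqrt t$. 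For general $k$ I would iterate this: assuming $|\nabla^j Rm|\le C_j\,t^{-j/2}$ for $j<k$ on a slightly larger ball, I would form $F_k=t^{k}|\nabla^k Rm|^2+\beta_k\,t^{k-1}|\nabla^{k-1}Rm|^2$. The negative term $-2\,t^{k-1}|\nabla^k Rm|^2$ produced by differentiating the lower-order square absorbs both the weight term $k\,t^{k-1}|\nabla^k Rm|^2$ and the reaction term $\sim K\,t^{k}|\nabla^k Rm|^2$ once $\beta_k$ is large, while the remaining mixed reaction terms $\nabla^i Rm*\nabla^{k-i}Rm*\nabla^k Rm$ with $0<i<k$ are controlled by the inductive bounds via Young's inequality. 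The maximum principle again yields $t^{k}|\nabla^k Rm|^2\le C_k$, that is $|\nabla^k Rm|\le C_k/t^{k/2}$.

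The main obstacle, and the only genuinely local point, is the spatial cutoff while the metric evolves. Since $|Rm|\le K$ on the cylinder forces $|Rc|\le c(n)K$, the metrics stay uniformly equivalent, $e^{-c(n)Kt}g(0)\le g(t)\le e^{c(n)Kt}g(0)$, so the $g(0)$-balls $B_{g(0)}(p,r)$ and $B_{g(0)}(p,r/2)$ remain comparable to their $g(t)$-counterparts for $t\le T$. I would fix a time-independent cutoff $\phi$ with $\phi\equiv1$ on $B_{g(0)}(p,r/2)$, $\mathrm{supp}\,\phi\subset B_{g(0)}(p,r)$, and standard bounds $|\nabla\phi|_{g(t)}^2\le C\phi/r^2$ and $|\nabla^2\phi|_{g(t)}\le C/r^2$, and insert $\phi^2$ into the auxiliary quantities; the commutator terms generated by $\Delta(\phi^2 u)$ are then absorbed, again using the good negative term and Young's inequality, at the cost of enlarging the constants by a factor depending on $r$. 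To keep the constants independent of $T$, one first runs the argument on a controlled time scale (which is where the dependence on $\alpha$ enters), where the exponential equivalence factors and the $cKt$ terms are bounded, and then propagates the estimate by shifting the base time. Carrying the cutoff terms through the higher-order induction, and bookkeeping their interaction with the weights $t^{k}$ so that the final constant depends only on $K$, $r$, $n$ and $\alpha$, is the most laborious part of the argument.
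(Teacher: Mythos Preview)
The paper does not actually prove this theorem: it is stated in the preliminaries section as a citation of Shi's result \cite{S} and is used as a black box in the proof of Theorem~\ref{main}. So there is no ``paper's own proof'' to compare against.

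That said, your sketch is the standard Bernstein--Shi argument and is essentially the method of the original reference: derive the evolution $(\partial_t-\Delta)|\nabla^k Rm|^2=-2|\nabla^{k+1}Rm|^2+\sum \nabla^i Rm*\nabla^j Rm*\nabla^k Rm$, couple successive orders via $F_k=t^k|\nabla^k Rm|^2+\beta_k t^{k-1}|\nabla^{k-1}Rm|^2$, absorb the reaction terms using the good negative gradient term, localize with a cutoff supported in $B_{g(0)}(p,r)$, and apply the maximum principle. Your handling of the time-dependent metric via the uniform equivalence $e^{-cKt}g(0)\le g(t)\le e^{cKt}g(0)$ and the remark that one first works on a bounded time interval (hence the $\alpha$-dependence) are the right technical points. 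The sketch is correct in outline; the genuinely delicate step you flag---carrying the cutoff through the induction while controlling the commutator terms $\nabla\phi*\nabla(\cdot)$ uniformly in $k$---is exactly where the work lies, and would need to be written out in full for a complete proof.
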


In \cite{P1}, Perelman proved the reduced volume defined in (\ref{r_volume}) has the following properties (also see \cite{KL} and \cite{MT}):

\begin{thm}\label{reduced_volume}\cite{P1}
Fix some point $p\in M^n$.
The reduced volume defined in (\ref{r_volume}) is
monotone non-increasing under the backward Ricci flow (\ref{backward_Ricci_flow}) and
$\mathcal {V}_p(\tau)\leq \lim\limits_{\tau\to 0}\mathcal
{V}_p(\tau)=1$. If $\mathcal
{V}_p(\tau_1)=\mathcal {V}_p(\tau_2)$ for some
$0<\tau_1<\tau_2$, then the Ricci flow is a gradient shrinking soliton.
If
 $\mathcal {V}_p(\bar{\tau})=1$ for some
time $\bar{\tau}>0$, then the Ricci flow is the trivial flow on flat
Euclidean space. Moreover, $\mathcal {V}_p^{i}(\tau)=\mathcal {V}_p(\lambda_i^{-1}\tau)$ under the rescaling
$g_i(\tau)=\lambda_i g(\lambda_i^{-1}\tau)$, where $\mathcal {V}_p^{i}(\tau)$ is the reduced volume of $(M^n,g_i(\tau))$.
\end{thm}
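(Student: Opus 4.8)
The plan is to reduce the entire statement to a single pointwise differential inequality for the heat-kernel-type density $v := (4\pi\tau)^{-\frac{n}{2}} e^{-l_p}$ and then integrate it against the evolving volume form. I write $l = l_p$ throughout.

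The first step is to record the fundamental (in)equalities satisfied by the reduced distance $l$, which come from the first and second variations of the $\mathcal{L}$-length along minimizing $\mathcal{L}$-geodesics. Writing $X = \gamma'$ for the velocity of such a geodesic and $K = \int_0^\tau \eta^{3/2} H(X)\, d\eta$, where $H$ is the trace of Hamilton's matrix Harnack quantity, the variation formulas give
\begin{align*}
|\nabla l|^2 = -R + \frac{l}{\tau} - \frac{K}{\tau^{3/2}}, \qquad
\frac{\partial l}{\partial \tau} = R - \frac{l}{\tau} + \frac{K}{2\tau^{3/2}}, \qquad
\Delta l \leq -R + \frac{n}{2\tau} - \frac{K}{2\tau^{3/2}}.
\end{align*}
Eliminating the Harnack term $K$ among these three relations yields the key inequality
\begin{align*}
\frac{\partial l}{\partial \tau} - \Delta l + |\nabla l|^2 - R + \frac{n}{2\tau} \geq 0.
\end{align*}

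Next I would pass from $l$ to $v$. Using $\nabla v = -v\nabla l$ and the backward evolution $\frac{\partial}{\partial\tau}\,dvol_{g(\tau)} = R\,dvol_{g(\tau)}$, a direct computation combined with the key inequality shows that $v$ is a subsolution of the conjugate heat equation,
\begin{align*}
\frac{\partial v}{\partial \tau} - \Delta v + R v \leq 0.
\end{align*}
Differentiating the reduced volume and integrating by parts (so that $\int_M \Delta v\, dvol = 0$) then gives
\begin{align*}
\frac{d}{d\tau}\,\mathcal{V}_p(\tau) = \int_M \left(\frac{\partial v}{\partial\tau} + Rv\right) dvol = \int_M \left(\frac{\partial v}{\partial\tau} - \Delta v + Rv\right) dvol \leq 0,
\end{align*}
which is the asserted monotonicity. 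For the normalization I would analyze $\tau \to 0^+$, where the $\mathcal{L}$-geometry degenerates to that of the fixed metric $g(0)$ with $l(y,\tau) \to d_{g(0)}(p,y)^2/(4\tau)$, so that $v\,dvol$ converges weakly to the Dirac mass at $p$ and $\mathcal{V}_p(\tau)\to 1$; with monotonicity this gives $\mathcal{V}_p(\tau)\leq 1$. For rigidity, the equality $\mathcal{V}_p(\tau_1)=\mathcal{V}_p(\tau_2)$ forces $\frac{d}{d\tau}\mathcal{V}_p\equiv 0$ on $[\tau_1,\tau_2]$, hence equality in the Laplacian comparison above, which is exactly the gradient shrinking soliton equation $Rc + \nabla^2 l - \frac{1}{2\tau}g = 0$; and $\mathcal{V}_p(\bar{\tau})=1$ forces $\mathcal{V}_p\equiv 1$ down to $\tau\to 0$, so the soliton has Euclidean reduced volume and must be the trivial flow on flat $\mathbb{R}^n$. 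The rescaling identity follows by direct substitution, using that $l$ is scale-invariant and $dvol$ scales by $\lambda_i^{n/2}$ under $g_i(\tau)=\lambda_i g(\lambda_i^{-1}\tau)$.

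The main obstacle is the regularity of $l$: the reduced distance is only locally Lipschitz and fails to be smooth on the $\mathcal{L}$-cut locus, so the three variation relations, and in particular the Laplacian comparison, hold only in the barrier (or distributional) sense. The hard part will therefore be to justify the pointwise combination and the integration by parts rigorously --- either by working on the open set where $l$ is smooth (the complement of a measure-zero cut locus) and controlling the contribution of the cut locus, or by phrasing all inequalities distributionally --- and, on the noncompact manifold, to verify that the decay of $v$ kills the boundary terms in $\int_M \Delta v\, dvol = 0$. The small-$\tau$ asymptotics needed for the normalization $\mathcal{V}_p(\tau)\to 1$ likewise require a careful estimate of $l$ near the basepoint.
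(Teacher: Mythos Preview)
The paper does not give its own proof of this theorem: it is stated as a quoted result from Perelman \cite{P1}, with pointers to the expositions \cite{KL} and \cite{MT}. So there is no in-paper argument to compare against; your sketch is essentially the standard proof one finds in those references, and the technical caveats you flag (barrier/distributional sense of the Laplacian comparison on the $\mathcal{L}$-cut locus, decay of $v$ to kill boundary terms on a noncompact $M$, small-$\tau$ asymptotics) are exactly the points that require care there.

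One remark on approach: Perelman's original argument, and the cleanest way to handle both the normalization $\lim_{\tau\to 0}\mathcal{V}_p(\tau)=1$ and the cut-locus issue simultaneously, is to write $\mathcal{V}_p(\tau)$ as an integral over the tangent space $T_pM$ via the $\mathcal{L}$-exponential map and show that the pointwise density $(4\pi\tau)^{-n/2} e^{-l}\,J(\tau)$ (with $J$ the $\mathcal{L}$-Jacobian) is nonincreasing in $\tau$ along each $\mathcal{L}$-geodesic, with limit $\pi^{-n/2}e^{-|v|^2}$ as $\tau\to 0$. This replaces your global integration-by-parts step by a pointwise ODE inequality, so no boundary terms arise and the cut locus is handled automatically (one simply restricts to the injectivity domain, which only decreases the integral). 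Your conjugate-heat-subsolution route is equivalent and perhaps more conceptual, but the Jacobian route is what makes the $\tau\to 0$ limit and the bound $\mathcal{V}_p\le 1$ immediate rather than requiring a separate heat-kernel-type analysis.
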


Finally, we need following point picking lemma used by Perelman \cite{P1} (also see Corollary 9.38 in \cite{MT} or \cite{KL}).
\begin{thm}\label{point_picking}
Let $(M, g)$ be a Riemannian manifold and let $p \in M$
and $r>0$ be given. Suppose that $B_g(p,2r)$ has compact closure in $M$ and
suppose that $f:B_g(p,2r)\to \mathbb{R}^+$ is a continuous, bounded function with $f(p) >
0$. Then there is a point $q\in B_g(p,2r)$ with the following properties:

(1) $f(q)\geq f(p)$;

(2) Setting $\alpha = \frac{f(p)}{f(q)}$, we have $d_g(p,q)\leq 2r(1-\alpha)$ and $f(q')<2f(q)$
for all $q'\in B_g(q,\alpha r)$.
\end{thm}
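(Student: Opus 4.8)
The plan is to produce the point $q$ by an iterative point-picking construction and to control the total displacement by a single geometric-series (telescoping) estimate. I would begin the iteration at $q_0=p$ and, having produced a point $q_i\in B_g(p,2r)$, set $\alpha_i=f(p)/f(q_i)$. If the pair $(q_i,\alpha_i)$ already satisfies the second half of condition (2), namely $f(q')<2f(q_i)$ for every $q'\in B_g(q_i,\alpha_i r)$, then I stop and take $q=q_i$. Otherwise the failure of that condition furnishes a point $q_{i+1}\in B_g(q_i,\alpha_i r)$ with $f(q_{i+1})\geq 2f(q_i)$, and the procedure continues.

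The first thing to verify is that the process terminates. By construction $f(q_{i+1})\geq 2f(q_i)$ at every non-terminal step, so $f(q_i)\geq 2^i f(p)$; since $f$ is bounded on $B_g(p,2r)$ (here the continuity together with the compact closure guarantees boundedness), the sequence cannot be infinite and the iteration ends at some index $N$, giving $q=q_N$. Along the way $\alpha_i=f(p)/f(q_i)\leq 2^{-i}$, so that the successive jumps $d_g(q_i,q_{i+1})<\alpha_i r$ decay geometrically, and $f(q)=f(q_N)\geq 2^N f(p)\geq f(p)$ already yields conclusion (1).

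The delicate point, and the one place where care is required, is to check simultaneously that each $q_i$ together with the test ball $B_g(q_i,\alpha_i r)$ lies inside the domain $B_g(p,2r)$ and that the terminal point obeys the sharp bound $d_g(p,q)\leq 2r(1-\alpha)$. Both follow from one telescoping estimate. Since $f(q_{i+1})\geq 2f(q_i)$ forces $\alpha_{i+1}\leq \tfrac12\alpha_i$, we have $\alpha_i\leq 2(\alpha_i-\alpha_{i+1})$, whence
\[
\sum_{j=0}^{i-1}\alpha_j\leq 2\sum_{j=0}^{i-1}(\alpha_j-\alpha_{j+1})=2(\alpha_0-\alpha_i)=2(1-\alpha_i).
\]
The triangle inequality then gives $d_g(p,q_i)\leq\sum_{j=0}^{i-1}d_g(q_j,q_{j+1})<r\sum_{j=0}^{i-1}\alpha_j\leq 2r(1-\alpha_i)<2r$, so every $q_i$ stays in the domain; moreover for $q'\in B_g(q_i,\alpha_i r)$ one has $d_g(p,q')<2r(1-\alpha_i)+\alpha_i r=2r-r\alpha_i<2r$, so the test ball is contained in $B_g(p,2r)$ and the stopping criterion is meaningful at every stage.

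Applying the same bound at the terminal index $N$ yields $d_g(p,q)=d_g(p,q_N)\leq 2r(1-\alpha_N)=2r(1-\alpha)$, which is the displacement bound of (2), while the stopping rule is precisely its remaining half; combined with (1) from the previous paragraph this completes the argument. The main obstacle is not any hard analysis but the bookkeeping: one must extract the containment of the test balls and the exact constant $2r(1-\alpha)$ from the single geometric-series estimate above, rather than proving termination and the distance bound by separate and weaker arguments.
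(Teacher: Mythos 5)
Your proof is correct and is essentially the standard iterative point-picking argument of Perelman (Morgan--Tian, Corollary 9.38), which the paper cites rather than reproving: start at $p$, jump to a point doubling $f$ whenever the stopping criterion fails, use boundedness of $f$ to terminate, and extract both the bound $d_g(p,q)\leq 2r(1-\alpha)$ and the containment of the test balls $B_g(q_i,\alpha_i r)\subset B_g(p,2r)$ from the telescoping estimate $\sum_{j<i}\alpha_j\leq 2(1-\alpha_i)$. One minor slip worth noting: your parenthetical claim that continuity plus compact closure of $B_g(p,2r)$ guarantees boundedness is false in general, since $f$ is only assumed continuous on the open ball and could blow up at the boundary --- but this is harmless here because boundedness is an explicit hypothesis of the theorem.
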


\section{proof of the main thoerem}

Before present the proof of Theorem \ref{main}, we need the following
\begin{lem}\label{lemma}
Let $(M^n,g)$ be the noncompact complete Ricci flat manifold with maximal volume growth. Suppose that $|Rm|(x)\to 0$ as $d(x)=d_g(x,p)\to \infty$. Then for any sequence $\{q_i\}$ with $d(q_i)\to \infty$, there exists a subsequence $\{q_i'\}$ of $\{q_i\}$ such that $\lim\limits_{d(q_i')\to\infty,\tau\to\infty}\mathcal {V}_{q_i'}(\tau)=1$.
\end{lem}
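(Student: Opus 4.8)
The plan is to reduce the large-$\tau$ behaviour of the reduced volume based at the far-away points $q_i'$ to a fixed-scale convergence problem by rescaling, and then to identify the rescaled limit with Euclidean space. Recall from (\ref{flat_r_volume}) that on the Ricci flat manifold
\[
\mathcal V_{q}(\tau)=\int_M (4\pi\tau)^{-n/2}e^{-d_g(q,y)^2/(4\tau)}\,dvol_g(y).
\]
Writing $\tilde g_i=\tau_i^{-1}g$ for a scale $\tau_i>0$ to be chosen, one has $d_{\tilde g_i}=\tau_i^{-1/2}d_g$ and $dvol_{\tilde g_i}=\tau_i^{-n/2}dvol_g$, so that
\[
\mathcal V_{q_i'}(\tau_i)=\int_M (4\pi)^{-n/2}e^{-d_{\tilde g_i}(q_i',y)^2/4}\,dvol_{\tilde g_i}(y),
\]
which is exactly the time-$1$ reduced volume $\mathcal V^{i}_{q_i'}(1)$ of the static flow on $(M,\tilde g_i)$; this is also the content of the rescaling identity in Theorem \ref{reduced_volume} with $\lambda_i=\tau_i^{-1}$. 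Thus it suffices to produce scales $\tau_i\to\infty$ for which $(M,\tilde g_i,q_i')$ converges, in the pointed sense, to $\mathbb{R}^n$, and for which the reduced volume passes to the limit.

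The key geometric input is that the curvature is small not only at $q_i'$ but on large balls around it: every $y\in B_g(q_i',\rho)$ satisfies $d_g(y,p)\ge d(q_i')-\rho$, so for each fixed radius the hypothesis (\ref{condition_3}) forces $\sup_{B_g(q_i',\rho)}|Rm|_g\to 0$ as $d(q_i')\to\infty$. Set $\epsilon_i=\sup_{B_g(q_i',d(q_i')/2)}|Rm|_g$, so $\epsilon_i\to 0$ along the sequence. I would then pass to a subsequence $\{q_i'\}$ and choose a scale such as $\tau_i=\min\{\epsilon_i^{-1/2},\sqrt{d(q_i')}\}$, which satisfies $\tau_i\to\infty$, $\sqrt{\tau_i}=o(d(q_i'))$, and $\tau_i\epsilon_i\to 0$. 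Because $|Rm|_{\tilde g_i}=\tau_i|Rm|_g$, for every fixed $R$ the rescaled curvature obeys $\sup_{B_{\tilde g_i}(q_i',R)}|Rm|_{\tilde g_i}\le \tau_i\epsilon_i\to 0$, so the limit will be flat. Moreover, since $Rc\equiv 0$ has nonnegative Ricci curvature, Bishop--Gromov monotonicity together with the maximal volume growth gives a uniform lower bound $\mathrm{vol}(B(x,r))\ge \alpha_M\omega_n r^n$ from every center $x$ and an upper bound $\mathrm{vol}(B(x,r))\le\omega_n r^n$; both are scale invariant, so $(M,\tilde g_i,q_i')$ is uniformly noncollapsed with Euclidean volume upper bound. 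Combining the curvature bounds with the interior derivative estimates of Theorem \ref{shi_estimates} (applied to the static flow, which exists for all time) yields uniform bounds on $|\nabla^k Rm|_{\tilde g_i}$ on fixed balls, so by Cheeger--Gromov compactness a subsequence converges in the pointed $C^\infty$ topology to a complete, flat, noncollapsed limit $(M_\infty,g_\infty,q_\infty)$.

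Next I would identify $M_\infty$. A complete flat manifold is $\mathbb{R}^n/\Gamma$ with $\Gamma$ acting freely and properly discontinuously by isometries; by the Bieberbach structure theory its volume grows like $r^{n-k}$, where $k\ge 0$ is the rank of the translational part, so the positive asymptotic volume ratio (which $M_\infty$ inherits from the uniform noncollapsing) forces $k=0$ and hence $\Gamma$ finite. A finite isometry group of $\mathbb{R}^n$ fixes a point, contradicting freeness unless $\Gamma$ is trivial, so $M_\infty=\mathbb{R}^n$. Finally, the reduced volume passes to the limit: on each ball $B(q_\infty,R)$ the $C^\infty$ convergence gives convergence of the truncated integrals to those for $\mathbb{R}^n$, while the Gaussian weight against the uniform bound $\mathrm{vol}(B(\cdot,r))\le\omega_n r^n$ makes the tail outside $B(q_i',R)$ uniformly small as $R\to\infty$. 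Hence $\mathcal V^{i}_{q_i'}(1)\to \mathcal V^{\mathbb{R}^n}(1)=1$, i.e. $\mathcal V_{q_i'}(\tau_i)\to 1$. Since $\mathcal V_{q_i'}(\cdot)$ is non-increasing and bounded above by $1$ (Theorem \ref{reduced_volume}), it follows that $1-\mathcal V_{q_i'}(\tau)\le 1-\mathcal V_{q_i'}(\tau_i)\to 0$ for all $\tau\le\tau_i$, which yields the asserted joint limit as $d(q_i')\to\infty$ and $\tau\to\infty$.

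The main obstacle is that one cannot prescribe the rate of $\tau_i\to\infty$ independently of the geometry: the rescaled manifolds are asymptotically flat only if $\tau_i$ grows slowly compared with the \emph{a priori} unknown rate at which the curvature decays along the sequence, and this decay rate is precisely what Theorem \ref{main} aims to quantify. The argument circumvents this by using the freedom to pass to a subsequence and to slave the scale $\tau_i$ to the measured curvature bound $\epsilon_i$, so that $\tau_i\epsilon_i\to 0$ is guaranteed a posteriori; checking that this choice simultaneously keeps $\sqrt{\tau_i}=o(d(q_i'))$ (so that the relevant balls remain in the region of small curvature) while still sending $\tau_i\to\infty$ is the delicate bookkeeping, after which the flatness of the limit, and with it the rigidity value $1$, is forced.
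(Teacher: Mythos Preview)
Your argument is correct but takes a genuinely different route from the paper's. The paper never rescales or passes to a limit space: after extracting a subsequence with $d_{i+1}\ge 4d_i$ so that the balls $B_i=B_g(q_i',d_i/2)$ lie in disjoint annuli on which $\sup_{B_i}|Rm|\to 0$, it invokes the Cheeger--Gromov--Taylor injectivity radius estimate to get $\mathrm{inj}(q_i')\to\infty$, and then computes directly in normal coordinates at $q_i'$,
\[
|\mathcal V_{q_i'}(\tau)-1|=\Bigl|\int_{T_{q_i'}M}(4\pi\tau)^{-n/2}e^{-|y|^2/(4\tau)}\bigl(\chi(\Sigma_{q_i'})\sqrt{G_{q_i'}}-1\bigr)\,dy\Bigr|\le\sup_{T_{q_i'}M}\bigl|\chi(\Sigma_{q_i'})\sqrt{G_{q_i'}}-1\bigr|,
\]
with $\Sigma_{q_i'}$ the segment domain and $G_{q_i'}$ the volume density; the right side is declared to tend to zero because the segment domain exhausts the tangent space and the density tends to $1$. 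No Cheeger--Gromov compactness, no Shi estimates, and no Bieberbach classification enter.

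Your approach---rescale by a scale $\tau_i$ slaved to the measured curvature $\epsilon_i$, extract a pointed $C^\infty$ limit, identify it as $\mathbb R^n$, and push the Gaussian integral through---is heavier but is precisely the template the paper uses later in the proof of Theorem~\ref{main}, so it has the merit of unifying the two steps. The price is that your conclusion is tied to the chosen diagonal: monotonicity gives $1-\mathcal V_{q_i'}(\tau)\le 1-\mathcal V_{q_i'}(\tau_i)$ only for $\tau\le\tau_i$, whereas the paper's normal-coordinate bound is formally uniform in $\tau$. With your particular choice $\tau_i=\min\{\epsilon_i^{-1/2},\sqrt{d(q_i')}\}$ one has $\tau_i\ll\epsilon_i^{-1}$, so in the application (where the relevant times are $\lambda_i^{-1}\tau\sim|Rm|(y_i')^{-1}$) your window may be too short; you should either state precisely which diagonals your argument covers, or note that the same rescaling argument run at the scale $\lambda_i$ already used in Theorem~\ref{main} gives the needed conclusion directly.
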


\begin{proof}
We take the subsequence $\{q_i'\}$ of $\{q_i\}$ as the following way. Let $q_1'=q_1$, $d_1=d_g(p,q_1')$, $B_1=B_g(q_1',\frac{d_1}{2})$. Since $d(q_i)\to \infty$, we can take $q_{i+1}'\notin B_g(p,4d_i)$, $d_{i+1}=d_g(p,q_{i+1}')$, $B_{i+1}=B_g(q_{i+1}',\frac{d_{i+1}}{2})$ for $i\geq 2$. Then we get a sequence of balls $\{B_i\}$ such that $B_j\cap B_k=\emptyset$ for $j\neq k$ and $B_{i+1}\subset B_g(p,2d_{i+1})\backslash B_g(p,2d_i)$.
Hence we have $\max\limits_{x\in B_i}|Rm|(x)\to 0$ as $i\to\infty$.
Moreover, the radius of the ball $B_i$ goes to infinity as $i\to \infty$ since $d_{i+1}\geq 4d_i$.
Note that $(M^n,g)$ has the maximal volume growth and $\max\limits_{x\in B_i}|Rm|(x)\to 0$ as $i\to\infty$.
Then the injectivity radius of $q_i'$ goes to infinity as $i\to\infty$ by Cheeger, Gromov and Taylor's injectivity radius
estimate (see \cite{CGT}).

Let $\Sigma_{q_i'}$ be the segment domain of $q_i'$ and take the normal coordinates on $\Sigma_{q_i'}$. Set $G_{q_i'}$ be the volume density with respect to the normal coordinates on $\Sigma_{q_i'}$.
Then we calculate
\begin{align*}
&|\mathcal {V}_{q_i'}(\tau)-1|\\
=&|\int_{M} (4\pi\tau)^{-\frac{n}{2}}e^{-\frac{d_g(q_i',y)^2}{4\tau}}dvol_{g}(y)-1|\\
=&|\int_{\Sigma_{q_i'}} (4\pi\tau)^{-\frac{n}{2}}e^{-\frac{|y|^2}{4\tau}}\sqrt{G_{q_i'}} dy^n-\int_{T_{q_i'}M} (4\pi\tau)^{-\frac{n}{2}}e^{-\frac{|y|^2}{4\tau}} dy^n|\\
=&|\int_{T_{q_i'}M} (4\pi\tau)^{-\frac{n}{2}}e^{-\frac{|y|^2}{4\tau}}(\chi(\Sigma_{q_i'})\sqrt{G_{q_i'}}-1) dy^n|\\
\leq& \sup\limits_{T_{q_i'}M}|\chi(\Sigma_{q_i'})\sqrt{G_{q_i'}}-1)|,
\end{align*}
where $\chi(\Sigma_{q_i'})(y)=\left\{
\begin{array}{l}
         1, \ \ \ y\in\Sigma_{q_i'}, \\
          0, \ \ \ y\notin\Sigma_{q_i'}.
\end{array}
\right.$
Since the injectivity radius of $q_i'$ goes to infinity, the radius of $B_i$ goes to infinity and $\max\limits_{x\in B_i}|Rm|(x)\to 0$, we conclude that
$\Sigma_{q_i'} \to T_{q_i'}M$ and $G_{q_i'}\to 1$ as $i\to\infty$. Then Lemma \ref{lemma} holds immediately.
\end{proof}

Now we can give the proof of Theorem \ref{main}.

\textbf{Proof of Theorem \ref{main}.}

We argue by contradiction. Suppose that $\sup\limits_{M^n}d^2(x)|Rm|(x)=\infty$. Then we can find a sequence $x_i\in M^n$ such that $|Rm|(x_i)\to 0$ and
$d^2(x_i)|Rm|(x_i)\to\infty$, where $d(x_i)=d_g(x_i,p)\to \infty$. Without loss of generality, we may assume $|Rm|(x_i)>0$ for all $i$. Then we consider the Ricci flow $\frac{\partial g}{\partial t}=-2 Rc$ on $(M^n,g)$ with $g(0)=g$. Let $\tau=-t$. Since $(M^n,g)$ is Ricci flat, the backward Ricci flow $\frac{\partial g}{\partial \tau}=2Rc$ on $M^n$ is static and exists for $(-\infty,0]$. We set $d_i=d_g(x_i,p)$, $B_i=B_{g}(x_i,\frac{d_i}{2})$ and $f=|Rm|^{\frac{1}{2}}(\cdot,0)$.

Now we use the technique of point picking due to Perelman \cite{P1}. Applying Theorem \ref{point_picking} to $B_i$ and $f$, we obtain that
there exists $y_i\in B_i$ such that
$$|Rm|(y_i,0)\geq |Rm|(x_i,0)$$
and
$$|Rm|(y,0)\leq 4|Rm|(y_i,0)$$
for all $y\in B_g(y_i,\frac{d_i|Rm|^{\frac{1}{2}}(x_i,0)}{2|Rm|^{\frac{1}{2}}(y_i,0)})\subset B_g(x_i,\frac{d_i}{2})$. Note that we still have
$d^2_g(p,y_i)|Rm|(y_i,0)\to \infty$ as $i\to\infty$ since $d_g(p,y_i)\geq \frac{d_i}{2}$. Then by Lemma \ref{lemma}, we conclude that there exists a subsequence $\{y_i'\}$ of $\{y_i\}$ such that $\lim\limits_{d_g(p,y_i')\to\infty,\tau\to\infty}\mathcal {V}_{y_i'}(\tau)=1$, where $\mathcal{V}_{y_i'}(\tau)$ is the reduced volume of $(M^n,g(\tau))$ based on point $y_i'$.

We consider the rescaled backward Ricci flow $g_i(\tau)=\lambda_ig(\lambda_i^{-1}\tau)$, where $\lambda_i=|Rm|(y_i',0)$. Then we see that $|Rm_{g_i(0)}|(y,0)\leq 4$ for all $y\in B_{g_i(0)}(y_i',\frac{d_i|Rm|^{\frac{1}{2}}(x_i,0)}{2})$. Since the solution to backward Ricci flow is static, we have $|Rm_{g_i(\tau)}|(y,\tau)\leq 4$ for all $y\in B_{g_i(0)}(y_i',\frac{d_i|Rm|^{\frac{1}{2}}(x_i,0)}{2})$. By Theorem \ref{shi_estimates}, all the derivatives of curvature $Rm(y,\tau)$ are uniformly bounded on $B_{g_i(0)}(y_i',\frac{d_i|Rm|^{\frac{1}{2}}(x_i,0)}{2})$ for any $\tau\in (-\infty,0)$.
Then we conclude that all the derivatives of curvature $Rm(y,\tau)$ are uniformly bounded on $B_{g_i(0)}(y_i',\frac{d_i|Rm|^{\frac{1}{2}}(x_i,0)}{2})$ for any $\tau\in (-\infty,0]$ since the backward Ricci flow on $M^n$ is static. Since $(M^n,g)$ has the maximal growth of volume, we obtain
$\frac{\text{Vol}_gB_g(y_i',r)}{r^n}\geq \Omega>0$ for all $r>0$. It follows that $\frac{\text{Vol}_{g_i(0)}B_{g_i(0)}(y_i',r)}{r^n}\geq \Omega>0$ by the Bishop-Gromov volume comparison theorem. Then the injectivity radius of $y_i'$ with respect to the metric $g_i(0)$ has the uniformly lower bound (see \cite{CGT}). By the fact $d_i|Rm|^{\frac{1}{2}}(x_i,0)\to \infty$ and Hamilton's precompactness theorem for the Ricci flow, we get there exists a sequence of $(M^n,g_i(\tau),y_i')$ (we still denote it by $(M^n,g_i(\tau),y_i')$ for simplicity) converges to $(M_{\infty}^n,g_{\infty}(\tau),y_{\infty})$ in $C^{\infty}$ sense, where $(M_{\infty}^n,g_{\infty}(\tau))$ is Ricci flat. Note that $|Rm_{g_i}|(y_i',0)=1$. It follows that $|Rm_{g_{\infty}}|(y_{\infty},0)=1$.

Then we consider the Perelman's reduced volume of $(M_{\infty}^n,g_{\infty}(\tau))$. We denote $\mathcal{V}^{\infty}_{y_{\infty}}(\tau)$  be the reduced volume of $(M_{\infty}^n,g_{\infty}(\tau))$ based on point $y_{\infty}$ and $\mathcal{V}^{i}_{y_i'}(\tau)$ be the reduced volume of $(M^n,g_i(\tau))$ based on point $y_i'$.
Fix $\tau>0$. The volume comparison implies
$$
\int_{d_{g_{\infty}}(y_{\infty},x)>A}(4\pi\tau)^{-\frac{n}{2}}e^{-\frac{d_{g_{\infty}}(x,y_{\infty})^2}{4\tau}}dvol_{g_{\infty}}(x)\leq C_1 e^{-C_2A},
$$
and
$$
\int_{d_{g_i}(y_i',x)>A}(4\pi\tau)^{-\frac{n}{2}}e^{-\frac{d_{g_i}(x,y_i')^2}{4\tau}}dvol_{g_i}(x)\leq C_1 e^{-C_2A},
$$
where $C_1$ and $C_2$ are the positive constants only depending on $n$ and $\tau$. Since $(M^n,g_i(\tau),y_i')$ converges to $(M_{\infty}^n,g_{\infty}(\tau),y_{\infty})$ in pointed Gromov-Hausdorff topology, we get
$$
\int_{d_{g_i}(y_i',x)\leq A}(4\pi\tau)^{-\frac{n}{2}}e^{-\frac{d_{g_i}(x,y_i')^2}{4\tau}}dvol_{g_i}(x)\to \int_{d_{g_{\infty}}(y_{\infty},x)\leq A}(4\pi\tau)^{-\frac{n}{2}}e^{-\frac{d_{g_{\infty}}(x,y_{\infty})^2}{4\tau}}dvol_{g_{\infty}}(x)
$$
for any $A>0$.
Hence
$$
\limsup\limits_{i\to\infty}|\mathcal
{V}^i_{y_i'}(\tau)-\mathcal{V}^{\infty}_{y_{\infty}}(\tau)|\leq 2 C_1 e^{-C_2A}.
$$
Taking $A\to \infty$, we conclude that
$$
\lim\limits_{i\to\infty}\mathcal
{V}^i_{y_i'}(\tau)=\mathcal{V}^{\infty}_{y_{\infty}}(\tau).
$$
Then by Theorem \ref{reduced_volume} and Lemma \ref{lemma}, we have
\begin{align*}
\mathcal{V}^{\infty}_{y_{\infty}}(\tau)
= \lim\limits_{i\to\infty}\mathcal
{V}^i_{y_i'}(\tau)
 =\lim\limits_{i\to\infty}\mathcal
{V}_{y_i'}(\lambda_i^{-1}\tau)
 =1.
\end{align*}
Then we conclude that $(M^n_{\infty},g_{\infty}(0))$ is isometric to $\mathbb{R}^n$ by Theorem \ref{reduced_volume}, which
contradicts to $|Rm_{g_{\infty}}|(y_{\infty},0)=1$.

\end{document}